\newtheorem{conj}{Conjecture}
\begin{document}
\mainmatter             

\title{A Note on the Immersion Number of Generalized Mycielski Graphs}

\titlerunning{Immersion Number of Mycielski Graphs}

\author{Karen L. Collins\inst{1} \and Megan E. Heenehan\inst{2} \and Jessica McDonald\inst{3}}

\authorrunning{K. Collins, M. Heenehan, J. McDonald}

\tocauthor{Karen L. Collins, Megan E. Heenehan, and Jessica McDonald}

\institute{Department of Mathematics and Computer Science, Wesleyan University, Middletown, CT 06459, USA,\\ \email{kcollins@wesleyan.edu}
\and
Department of Mathematical Sciences, Eastern Connecticut State University, Willimantic, CT 06226, USA,\\
\email{heenehanm@easternct.edu}
\and
Department of Mathematics and Statistics, Auburn University, Auburn, AL 36849, USA,\\
\email{mcdonald@auburn.edu}
}

\maketitle

\begin{abstract}
The immersion number of a graph $G$, denoted ${\rm im}(G)$, is the largest $t$ such that $G$ has a $K_t$-immersion. In this note we are interested in determining the immersion number of the $m$-Mycielskian of $G$, denoted $\mu_m(G)$. Given the immersion number of $G$ we provide a lower bound for ${\rm im}(\mu_m(G))$. To do this we introduce the ``distinct neighbor property" of immersions. We also include examples of classes of graphs where ${\rm im}(\mu_m(G))$ exceeds the lower bound. We conclude with a conjecture about ${\rm im}(\mu_m(K_t))$.
\keywords{immersion, Mycielski graphs, generalized Mycielski construction, cliques, $m$-Mycielskian}
\end{abstract}

\section{Introduction}
A pair of adjacent edges $uv$ and $vw$ in a graph are \emph{split off} (or \emph{lifted}) from their common vertex $v$ by deleting the edges $uv$ and $vw$, and adding the edge $uw$. Given graphs $G$ and $H$, we say that $G$ has a \emph{$H$-immersion} if a graph isomorphic to $H$ can be obtained from a subgraph of $G$ by splitting off pairs of edges, and removing isolated vertices. Immersions have gained considerable interest in the last number of years (see eg. \cite{GM, GPRTW, WoWo}). In \cite{CHM} we define the immersion number of $G$, denoted ${\rm im}(G)$, to be the largest $t$ for which $G$ has a $K_t$-immersion.  Abu-Khzam and Langston \cite{AKL} have conjectured that for a graph $G$, if the chromatic number of $G$ is  $t$, then ${\rm im}(G)\geq t$. 

In this note we focus on the immersion number of Mycielski graphs and generalized Mycielski graphs.  Mycielski \cite{My} proved that if $G$ is triangle-free and $\chi(G)=t$, then $\mu(G)$, the Mycielskian of $G$ (whose definition appears in the next section), is triangle-free and $\chi(\mu(G))=t+1$. Hence Mycielski graphs provide examples of triangle-free graphs with arbitrarily high chromatic number. The Mycielski construction was generalized by Stiebitz \cite{St} (see also \cite{SSt}) and independently by Van Ngoc \cite{VN} (see also \cite{VNT}). The general construction was also described as the ``cone over $G$" by Tardif \cite{Tar}. While the chromatic number of the generalized Mycielski graphs does not necessarily increase when the construction is applied \cite{MS}, Stiebitz \cite{St} proved the chromatic number does increase if the construction is applied to a specific class of graphs. We will show that the immersion number of Mycielski graphs and generalized Mycielski graphs increases by at least 1, showing that these graphs behave as expected with respect to the Abu-Khzam--Langston Conjecture.

We begin, in Section~\ref{def}, by providing an alternate definition of immersion along with definitions for Mycielski graphs and generalized Mycielski graphs. In Section~\ref{result}, we define the ``distinct neighbor property" of immersions and prove that if a graph has a $K_t$-immersion, then it has a $K_t$-immersion with the distinct neighbor property. This is then used to prove that if the immersion number of $G$ is $t$, then the immersion number of the Mycieskian (or generalized Mycielskian) of a graph is at least $t+1$. In Section~\ref{ex}, we provide several examples of graphs where this increase is more than one. We conclude with a conjecture about the immersion number of the generalized Mycielskian of complete graphs.

\section{Definitions}\label{def}
It will be useful to keep the following alternate definition of immersion in mind. 

\begin{definition}Given graphs $G$ and $H$, $G$ has an $H$-\emph{immersion} if there is a one-to-one function $\phi: V(H)\to V(G)$ such that for each edge $uv\in E(H)$, there is a path $P_{uv}$ in $G$ joining vertices $\phi(u)$ and $\phi(v)$, and the paths $P_{uv}$ are pairwise edge-disjoint for all $uv\in E(H)$. In this context we call the vertices of $\{\phi(v): v\in V(H)\}$ the \emph{terminals} of the $H$-immersion, and we call internal vertices of the paths $\{P_{uv}: uv\in E(H)\}$ the \emph{pegs} of the $H$-immersion. \end{definition}

\noindent In this note, we are interested in clique immersions in Mycielski graphs.

\begin{definition}\label{defMy} {\rm \cite{My}}
Given a graph $G$ the \emph{Mycielski graph} (or Mycielskian of a graph), denoted $\mu(G)$, is defined to be the graph with vertex set $(V(G)\times\{0, 1\})\cup\{w\}$, and edges $(u,0)-(v,0)$ and $(u,0)-(v,1)$ for all $uv\in E(G)$, and edges $(u,1)-w$ for all $u\in V(G)$.
\end{definition}
The notation in Definition~\ref{defMy} follows~\cite{MS}. Figure~\ref{M(C_5)} shows $\mu(C_5)$, as an example.

\begin{figure}
\begin{center}
\includegraphics[width=2in]{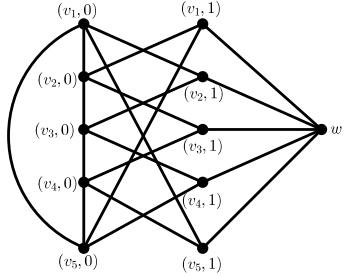}
\caption{The Mycielski graph $\mu(C_5)=\mu_2(C_5)$, where consecutive vertices in the $C_5$ have been labeled $v_1, v_1, \ldots, v_5$.}
\label{M(C_5)}
\end{center}
\end{figure}

The Mycielski construction can be generalized to the \emph{$m$-Mycielskian of $G$}, denoted $\mu_m(G)$, as follows. 

\begin{definition}\label{GenMy} {\rm\cite{SSt, St, VN, VNT}} Given a graph $G$ and an integer $m\geq 1$, the \emph{$m$-Mycielskian of $G$}, denoted $\mu_m(G)$, is defined to be the graph with vertex set $(V(G)\times\{0, 1, \ldots, m-1\})\cup\{w\}$, and edges $(u,0)-(v,0)$ and $(u,i)-(v,i+1)$ for all $uv\in E(G)$, and edges $(u,m-1)-w$ for all $u\in V(G)$.
\end{definition}
The notation in Definition~\ref{GenMy} again follows~\cite{MS}. Notice that $\mu_1(G)$ is obtained from a copy of $G$ with an additional vertex adjacent to all vertices in $G$ and $\mu_2(G)$ is the Mycielskian of $G$, that is $\mu(G)=\mu_2(G)$. This construction is also known as the \emph{cone over $G$} \cite{Tar}. When described as the cone over $G$  we use a direct product of graphs. We let $\mathbb{P}_m$ be a path on $m+1$ vertices with a loop at one end. The \emph{$m$-Mycielskian of $G$}, denoted $\mu_m(G)$, is obtained from $G\times \mathbb{P}_m$ (where $\times$ is the direct product) by collapsing all vertices whose second coordinate is $m$ to a single vertex labeled $(*,m)$ ($w$ in Definition~\ref{GenMy}) \cite{Tar}.
 Figure~\ref{M_4(C_5)} shows $\mu_4(C_5)$, as an example.

\begin{figure}
\begin{center}
\includegraphics[width=3.5in]{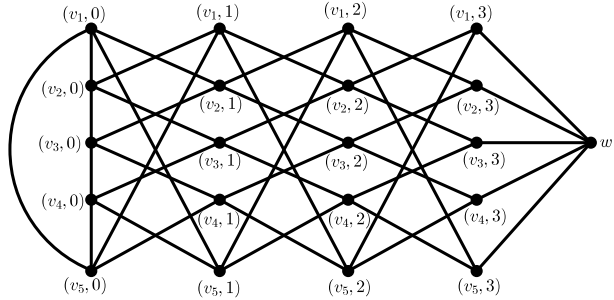}
\caption{The $4$-Mycielskian of $C_5$, $\mu_4(C_5)$, where consecutive vertices in the $C_5$ have been labeled $v_1, v_1, \ldots, v_5$.}
\label{M_4(C_5)}
\end{center}
\end{figure}

Notice that $\chi(\mu_1(G))=\chi(G)+1$ and $\chi(\mu_2(G))=\chi(G)+1$ \cite{My}. Stiebitz proved that when $M(3)$ is the set of all $3$-color-critical graphs and for $k\geq 3$ and $M(k+1)=\{\mu_m(G)~|~G\in M(k) \textrm{ and } r\geq 1\}$, then $M(k)$ is $k$-color-critical for all $k\geq3$ \cite{St}. However for all $t\geq 4$, there exists a graph $G$ such that $\chi(G)=\chi(\mu_3(G))=t$ \cite{Tar}. The smallest example of this is the circulant graph formed by placing $7$ vertices around a circle, equally spaced, each vertex is adjacent to its nearest two vertices in each direction around the circle. This graph has chromatic number $4$ as does the $3$-Mycielskian of the graph \cite{Tar}.

Our main result is Theorem~\ref{Myc}, appearing in the next section, which states that if ${\rm im}(G)=t$ and $m\geq 1$, then ${\rm im}(\mu_m(G))\geq t+1$. This result confirms the Abu-Khzam--Langston Conjecture for this class of graphs. Moreover, given the special nature of generalized Mycielski graphs, as an immediate corollary  we get that there exist graphs with fixed clique number and arbitrarily high chromatic number that satisfy the conjecture.

In the next section we prove Theorem~\ref{Myc}. This is accomplished by first defining the ``distinct neighbor property" of an immersion and proving every graph with a $K_t$-immersion contains a $K_t$-immersion with the distinct neighbor property.

\section{Results}\label{result} 

As the main element of proof of Theorem~\ref{Myc}, we present a lemma about immersions with the ``distinct neighbor property.'' Propositon~\ref{dnp} proves that if $G$ has a $K_t$-immersion, then it has a $K_t$-immersion in which each terminal has a distinct neighbor. This may be of independent interest as immersions require edges on paths be distinct, but not vertices. 

\begin{definition} Let $G$ be a graph having a $K_t$-immersion, with terminals $v_1, \ldots, v_t$. If, for each $i\in\{1, 2, \ldots, t\}$, it is possible to choose a distinct neighbor $v_{f(i)}$ of $v_i$ in $G$ (i.e. such that $\{v_{f(i)}: 1\leq i\leq t\}$ is a set of $t$ vertices), then we say that the immersion has the \emph{distinct neighbor property}.\end{definition}

\begin{proposition}\label{dnp} Let $G$ be a graph that has a $K_t$-immersion. Then $G$ has a $K_t$-immersion with the distinct neighbor property.
\end{proposition}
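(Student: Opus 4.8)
The plan is to recast the distinct neighbor property as a system of distinct representatives and invoke Hall's theorem, then deal separately with the single degenerate configuration in which Hall's condition fails. Write $v_1, \ldots, v_t$ for the terminals of a given $K_t$-immersion and let $N(v_i)$ denote the neighborhood of $v_i$ in $G$. Finding a distinct neighbor $v_{f(i)}$ for each $i$ is exactly finding a system of distinct representatives for the family $N(v_1), \ldots, N(v_t)$, equivalently a matching saturating the terminals in the bipartite incidence graph. The one structural fact I would feed into Hall's theorem is a degree bound: since $v_i$ is an endpoint of the $t-1$ pairwise edge-disjoint paths joining it to the other terminals, these paths leave $v_i$ along $t-1$ distinct edges, so $\deg_G(v_i)\ge t-1$ and hence $|N(v_i)|\ge t-1$ for every $i$.

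First I would check Hall's condition directly. Suppose it fails, so some $S\subseteq\{1,\ldots,t\}$ has $|N(S)|<|S|$, where $N(S)=\bigcup_{i\in S}N(v_i)$. Picking any $i\in S$ gives $t-1\le|N(v_i)|\le|N(S)|<|S|\le t$, which forces $|N(S)|=t-1$, $|S|=t$, and $N(v_i)=N(S)$ for all $i$. Thus the only obstruction is the degenerate case in which all $t$ terminals share a common neighborhood $W$ with $|W|=t-1$; note that no terminal can lie in $W$, since $v_i\in N(v_i)$ would be a loop. In every other situation Hall's theorem yields the desired system of representatives and the original immersion already has the distinct neighbor property.

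It remains to produce, in this degenerate case, a different $K_t$-immersion that does satisfy the property; I expect this to be the main obstacle, and it cannot be avoided because the configuration genuinely occurs (for instance when $G=K_{t,t-1}$). The key observation is that $N(v_i)=W$ for all $i$ forces all $t(t-1)$ edges between the terminal set $T=\{v_1,\ldots,v_t\}$ and $W$ to be present, so $G$ contains $K_{t,t-1}$ on $T\cup W$. I would then re-select the terminal set as $W\cup\{v_1\}$, which again has $t$ vertices. The $t-1$ pairs of the form $\{v_1,w\}$ with $w\in W$ are joined by the single edges $v_1 w$, while the $\binom{t-1}{2}$ pairs inside $W$ are joined by length-two paths $w_i-v_k-w_j$ routed through the remaining original terminals $v_2,\ldots,v_t$ as pegs. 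Making these paths edge-disjoint amounts to properly edge-colouring the complete graph $K_{t-1}$ on $W$ so that each colour class, a matching on $W$, is assigned its own peg $v_k$; since $\chi'(K_{t-1})\le t-1$ and exactly $t-1$ pegs are available, such an assignment exists. Finally I would verify the distinct neighbor property for the new terminals by sending each $w_k$ to $v_{k+1}$ and $v_1$ to $w_1$, which are pairwise distinct, completing the argument.
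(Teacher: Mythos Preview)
Your proof is correct and follows essentially the same route as the paper's: recast the problem via Hall's theorem, use the degree bound $\deg_G(v_i)\ge t-1$ to force the single obstruction where all terminals share a common neighborhood $W$ of size $t-1$, then build a new $K_t$-immersion on $W$ together with one original terminal by routing length-two paths inside $W$ through the remaining original terminals via a proper $(t-1)$-edge-colouring of $K_{t-1}$. The only differences are cosmetic (you retain $v_1$ where the paper retains $a_t$, and your deduction that $|S|=t$ and all neighborhoods coincide is spelled out a bit more explicitly).
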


\begin{proof} Let $A=\{a_1\,\ldots, a_t\}$ be the set of terminals of a $K_t$-immersion in $G$ that does not have the distinct neighbor property. Let $B=\{b_1, b_2, \ldots\}$ be the set of all vertices in $G$ which are neighbors of at least one vertex in $A$ (this definition allows for $A\cap B\neq \emptyset$, although we will soon see that this does not actually occur). Define $H$ to be the bipartite graph with bipartition $(A, B)$  where $a_ib_j\in E(H)$ if and only if $a_ib_j\in E(G)$. Since our immersion does not have the distinct neighbor property, $H$ does not have a matching saturating $A$. Hence by Hall's Theorem \cite{Hall}, there exists $S\subseteq A$ such that $|N_H(S)|<|S|$. Since $A$ is the set of terminals of a $K_t$-immersion, and $G$ is simple, each vertex in $A$ has at least $t-1$ distinct neighbors in $B$. Hence it must be the case that $N_H(a_i)=B$ for all $i\in\{1, 2, \ldots, t\}$ and $|B|=t-1$. In particular, this means that $A\cap B=\emptyset$ and hence that $H$ is a subgraph of $G$.

We now describe a new $K_t$-immersion in $G$ with terminals \[b_1, b_2, \ldots, b_{t-1}, a_t. \] First note that $a_t$ is adjacent to all of the other terminals in the list. For the other paths required, we will show that in fact $H\setminus a_t$ has a $K_{t-1}$-immersion using $b_1, \ldots, b_{t-1}$ as terminals.

Consider an auxiliary copy of $K_{t-1}$ that has been properly edge-colored with at most $t-1$ colors (possible by Vizing's Theorem \cite{V2}). Label the vertices of the $K_{t-1}$ with $b_1,\ldots, b_{t-1}$ and label the colors of the edges with $a_1, a_2, \ldots, a_{t-1}$. For each pair $b_i, b_j$, the color of the edge between them in this auxiliary graph (say $a_k$) corresponds the path $b_i-a_k-b_j$ we should pick to join them in the $K_t$-immersion we are building in $G$. Since the edge-coloring is proper, no edge of $G$ gets used twice in this assignment of paths, and we have defined a $K_t$-immersion. 

Using this new $K_t$-immersion in $G$ we define a new $H$ in the same way as at the beginning of the proof. We now see there is a matching saturating $A$ and therefore this new $K_t$-immersion has the distinct neighbor property. For example, for each terminal $b_i$ we choose as its distinct neighbor $a_i$ (for $i\in\{1,2,\ldots, t-1\}$) and for $a_t$ we choose $b_1$ as its distinct neighbor (really any $b_i$ works since $a_t$ is adjacent to $b_i$). $\Box$
\end{proof}

We now use Proposition~\ref{dnp} to prove Theorem~\ref{Myc}.

\begin{theorem}\label{Myc} Let $m\geq 1$ be an integer and suppose that ${\rm im}(G)=t$. Then ${\rm im}(\mu_m(G))\geq t+1$.
\end{theorem}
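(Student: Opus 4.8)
The plan is to build a $K_{t+1}$-immersion in $\mu_m(G)$ directly, using the apex vertex $w$ as the extra terminal. First I would apply Proposition~\ref{dnp} to replace the given $K_t$-immersion of $G$ by one with the \emph{distinct neighbor property}; write its terminals as $v_1,\ldots,v_t$, and for each $i$ let $u_i:=v_{f(i)}$ be the distinct neighbor supplied by that property, so that $u_iv_i\in E(G)$ and the $u_i$ are pairwise distinct. Since $a\mapsto(a,0)$ embeds $G$ as the induced ``level $0$'' subgraph of $\mu_m(G)$ (its edges being exactly the $(a,0)-(b,0)$ with $ab\in E(G)$), this immersion transfers verbatim to a $K_t$-immersion of $\mu_m(G)$ with terminals $(v_1,0),\ldots,(v_t,0)$, all of whose paths live entirely in level $0$. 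It then remains to adjoin $w$ as a $(t+1)$-st terminal by producing, for each $i$, a path $Q_i$ from $w$ to $(v_i,0)$ so that the $Q_i$ are pairwise edge-disjoint and also edge-disjoint from the level-$0$ paths.

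For $Q_i$ I would zig-zag down through the levels between $v_i$ and its neighbor $u_i$: writing $x_{i,\ell}=v_i$ when $\ell$ is even and $x_{i,\ell}=u_i$ when $\ell$ is odd, let $Q_i$ visit $w$ followed by $(x_{i,\ell},\ell)$ for $\ell=m-1,m-2,\ldots,1,0$. The first edge $w-(x_{i,m-1},m-1)$ exists because $w$ is adjacent to every level-$(m-1)$ vertex, and each later step joins $(v_i,\ell)$ to $(u_i,\ell\pm1)$, which is an edge of $\mu_m(G)$ precisely because $u_iv_i\in E(G)$. By construction $Q_i$ ends at $(v_i,0)$ (level $0$ is even), descends exactly one level per step (so it is a genuine path), and uses no level-$0$ edge, its lowest edge being the cross edge $(u_i,1)-(v_i,0)$. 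This last observation gives edge-disjointness from the transferred level-$0$ immersion for free.

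The crux is edge-disjointness among the $Q_i$ themselves, and here the uniform parity of the zig-zag is what makes the argument clean: at every fixed level $\ell$, all paths simultaneously occupy their $v$-vertex (if $\ell$ is even) or their $u$-vertex (if $\ell$ is odd). Hence, for any cross edge of $Q_i$ spanning levels $\ell$ and $\ell+1$, comparing it with the corresponding edge of $Q_j$ amounts to comparing $v_i$ with $v_j$ at one endpoint and $u_i$ with $u_j$ at the other; since the $v_i$ are distinct terminals and the $u_i$ are distinct by the distinct neighbor property, no two such edges can coincide (note that a possible coincidence $u_i=v_j$ is harmless, since a $u$-vertex is never compared with a $v$-vertex at a common level). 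The same distinctness disposes of the top edges $w-(x_{i,m-1},m-1)$. I would then conclude that $(v_1,0),\ldots,(v_t,0),w$, together with the level-$0$ paths and the $Q_i$, constitute a $K_{t+1}$-immersion, so ${\rm im}(\mu_m(G))\ge t+1$.

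I expect the main obstacle to be exactly this edge-disjointness bookkeeping for general $m$, and in particular seeing why the distinct neighbor property is genuinely needed rather than merely convenient. The delicate case is even $m$: there $m-1$ is odd, so every $Q_i$ leaves $w$ along $w-(u_i,m-1)$, and distinctness of these starting edges forces the $u_i$ to be distinct -- which is precisely what Proposition~\ref{dnp} guarantees. Finally I would sanity-check the specializations $m=1$ (where $Q_i$ is the single edge $w-(v_i,0)$, recovering the join of $G$ with an apex) and $m=2$ (the classical Mycielskian, with $Q_i=w-(u_i,1)-(v_i,0)$) to confirm the construction behaves as expected.
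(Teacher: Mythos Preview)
Your proposal is correct and follows essentially the same route as the paper: invoke Proposition~\ref{dnp}, take $(v_1,0),\ldots,(v_t,0),w$ as terminals, use the level-$0$ copy of the original $K_t$-immersion, and connect $w$ to each $(v_i,0)$ by the zig-zag $(v_i,0)-(v_{f(i)},1)-(v_i,2)-\cdots-w$. Your parity-based bookkeeping for edge-disjointness of the $Q_i$ is more explicit than the paper's (which simply asserts it), and your observation that the even-$m$ case is where distinctness of the $u_i$ is genuinely forced is a nice sharpening.
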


\begin{proof} By Proposition \ref{dnp}, we can choose a $K_t$-immersion in $G$ with the distinct neighbor property. Let $v_1, \ldots, v_t$ be the terminals of the immersion, and let $v_{f(1)}, \ldots, v_{f(t)}$ be their distinct neighbors. 

When $m=1$ we use the immersion in $G$ with the additional terminal $w$ which is adjacent to all vertices in $G$.

When $m=2$, to form a $K_{t+1}$-immersion in $\mu_2(G)$, we take $(v_1,0), (v_2, 0), \ldots,$ $(v_t, 0)$ and $w$ as our terminals. The first $t$ of these have the required paths joining them in $G$, and the vertex $w$ is joined to each $(v_i, 0)$ via the path $(v_i,0)-(v_{f(i)}, 1)-w$.  These paths are edge-disjoint since $v_f(i)$ is $v_i$'s distinct neighbor.

For $m>2$, we again take $(v_1,0), (v_2, 0), \ldots, (v_t, 0)$ and $w$ as our terminals. We use the $K_t$-immersion in $G$ to connect the first $t$ terminals. To join $w$ to the other terminals we alternate between $v_i$ and its distinct neighbor $v_{f(i)}$ (that is $(v_i,0)-(v_{f(i)}, 1)-(v_i, 2)-(v_{f(i)}, 3)-\dots$) until we come to the vertex with second coordinate $m-1$ which is adjacent to $w$. This completes the $K_{t+1}$-immersion in $\mu_m(G)$. $\Box$
\end{proof}

In the next section we provide examples of classes of graphs where the bound of Theorem~\ref{Myc} is best possible and examples of classes of graphs that exceed the bound of Theorem~\ref{Myc}. 

\section{Examples}\label{ex}
While chromatic number may not increase by more than one when the generalized Mycielski construction is applied, the immersion number of certain classes of graphs may increase more drastically. When $m$ increases the paths in $\mu_m(G)$ get longer and the degree of many vertices doubles. In some cases this allows the immersion number to increase by more than one. We explore this idea by examining several classes of graphs.

To calculate the immersion numbers of specific Mycielski (and generalized Mycielski) graphs we must consider the degrees of the vertices in the graph and the number of vertices of each degree. In order to have an immersion of $K_t$ there must be at least $t$ vertices of degree $t-1$ and the immersion number of any graph must be less than or equal to the maximum degree plus one. 

The bound of Theorem~\ref{Myc} is best possible when $m=1$ since, when forming $\mu_1(G)$, one vertex is added that is adjacent to all vertices in $G$. The bound is also best possible when $m=2$ and $G$ is a complete graph.

\begin{proposition}\label{MycK_t}
${\rm im}(\mu_2(K_t))=t+1$
\end{proposition}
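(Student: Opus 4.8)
The lower bound $\mathrm{im}(\mu_2(K_t))\ge t+1$ comes for free: since $K_t$ trivially has a $K_t$-immersion we have ${\rm im}(K_t)=t$, so Theorem~\ref{Myc} applied with $m=2$ gives ${\rm im}(\mu_2(K_t))\ge t+1$ immediately. Thus the entire content of the proof is the matching upper bound ${\rm im}(\mu_2(K_t))\le t+1$, i.e. that $\mu_2(K_t)$ admits no $K_{t+2}$-immersion. The plan is to establish this purely from the degree sequence of $\mu_2(K_t)$, using the necessary condition recalled just before the statement: a $K_s$-immersion requires at least $s$ vertices of degree at least $s-1$, because each of the $s$ distinct terminals is an endpoint of $s-1$ pairwise edge-disjoint paths.

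First I would record the degrees. Writing the vertex set as the $t$ ``bottom'' copies $(u,0)$, the $t$ ``middle'' copies $(u,1)$, and the apex $w$: each $(u,0)$ is adjacent to every $(v,0)$ and every $(v,1)$ with $v\neq u$, hence has degree $2(t-1)=2t-2$; each $(u,1)$ is adjacent to every $(v,0)$ with $v\neq u$ together with $w$, hence has degree $t$; and $w$ is adjacent to all $t$ middle vertices, hence has degree $t$.

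The key step is then a counting observation. For $t\ge 3$ we have $2t-2\ge t+1$, while the middle vertices and $w$ have degree $t<t+1$; therefore $\mu_2(K_t)$ has exactly $t$ vertices of degree at least $t+1$, namely the bottom vertices. A $K_{t+2}$-immersion would require $t+2$ terminals each of degree at least $t+1$, and since $t+2>t$ there are simply not enough eligible vertices. This rules out a $K_{t+2}$-immersion and yields ${\rm im}(\mu_2(K_t))\le t+1$, completing the argument in this range.

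The only place needing separate care is the small cases $t\in\{1,2\}$, where $2t-2\le t$ so the bottom vertices no longer dominate the degree sequence and the threshold $2t-2\ge t+1$ fails. There the cruder bound suffices: the maximum degree of $\mu_2(K_t)$ equals $t$, so ${\rm im}\le \Delta+1$ gives ${\rm im}(\mu_2(K_t))\le t+1$ directly (concretely, $\mu_2(K_2)$ is just $C_5$, whose immersion number is easily checked to be $3=t+1$). I expect no genuinely hard step here; the main point of attention is bookkeeping the three degree values correctly and noticing the $t\ge 3$ threshold, after which the upper bound is forced by a vertex count rather than by any delicate analysis of path systems.
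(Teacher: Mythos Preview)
Your proof is correct and follows exactly the same approach as the paper: invoke Theorem~\ref{Myc} for the lower bound, then compute the degree sequence of $\mu_2(K_t)$ and observe that too few vertices have degree at least $t+1$ to support a $K_{t+2}$-immersion. The paper's version is terser (it states the degree counts and concludes in one line without isolating the small-$t$ cases), whereas you spell out the threshold $t\ge 3$ and treat $t\le 2$ separately; this extra care is harmless but not strictly needed, since in every case there are at most $t<t+2$ vertices of degree exceeding $t$.
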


\begin{proof}
The immersion number of $K_t$ is $t$, thus by Theorem~\ref{Myc} ${\rm im}(\mu_2(K_t))\geq t+1$. The graph $\mu_2(K_t)$ has $t$ vertices of degree $2t-2$ and $t+1$ vertices of degree $t$, therefore ${\rm im}(\mu_2(K_t))\leq t+1$. Thus ${\rm im}(\mu_2(K_t))=t+1$.
$\Box$
\end{proof}

The following provides examples where the immersion number is greater than the bound of Theorem~\ref{Myc}. We begin by considering the path on $n$ vertices, $P_n$, with the knowledge that ${\rm im}(P_n)=2$.

\begin{proposition}\label{MyP_5}
 $ {\rm im}(\mu_m(P_5))=
  \begin{cases}
    4 & \text{for } m = 1, 2 \\
    5 & \text{for } m \geq 3 
  \end{cases}$
\end{proposition}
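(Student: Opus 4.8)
The plan is to determine ${\rm im}(\mu_m(P_5))$ from two sides: degree-based upper bounds, and explicit immersion constructions for the matching lower bounds. Since Theorem~\ref{Myc} gives only ${\rm im}(\mu_m(P_5))\ge {\rm im}(P_5)+1=3$, which lies below every claimed value, all of the content is in beating that bound. The first step is to record the degrees in $\mu_m(P_5)$: writing the vertices of $P_5$ in path order as $v_1,\dots,v_5$ (so $v_2,v_3,v_4$ have degree $2$ and $v_1,v_5$ degree $1$), a copy $(v_j,i)$ at level $0$ or at an interior level has degree $2\deg_{P_5}(v_j)$, a top-level copy $(v_j,m-1)$ has degree $\deg_{P_5}(v_j)+1$, and $w$ has degree $5$. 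In particular the only vertices of degree $\ge 4$ are $w$ together with the copies $(v_j,i)$ for $j\in\{2,3,4\}$ and $i\le m-2$, and $w$ is the unique vertex of degree $\ge 5$.

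For the upper bounds I would invoke the observation recorded before Proposition~\ref{MycK_t} that a $K_t$-immersion needs at least $t$ vertices of degree $\ge t-1$. When $m\in\{1,2\}$ a count shows there are only four vertices of degree $\ge 4$ (namely $w$ and the three middle copies at level $0$), so there is no $K_5$-immersion and ${\rm im}(\mu_m(P_5))\le 4$; when $m\ge 3$, $w$ is the only vertex of degree $\ge 5$, so there is no $K_6$-immersion and ${\rm im}(\mu_m(P_5))\le 5$. The matching lower bounds for $m=1,2$ are small: I would exhibit a $K_4$-immersion on the terminals $\{(v_2,0),(v_3,0),(v_4,0),w\}$, routing the one non-adjacent pair and the paths to $w$ through the low-degree end copies and (for $m=2$) the level-$1$ vertices. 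The high degree of $w$ makes these routings routine.

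The substance, and the main obstacle, is the lower bound ${\rm im}(\mu_m(P_5))\ge 5$ for all $m\ge 3$: one must produce a single family of $K_5$-immersions valid uniformly in $m$. I would take terminals $\{(v_2,0),(v_3,0),(v_4,0),(v_2,1),w\}$ — four degree-$4$ copies in the bottom two levels, plus $w$ — deliberately avoiding the central interior copy $(v_3,1)$, whose only neighbours are the two full-degree terminals $(v_2,0),(v_4,0)$ and two level-$2$ vertices, leaving it too boxed in to reach the other terminals. The $\binom{4}{2}=6$ paths among the first four terminals can all be realized inside levels $0,1,2$, using the bottom copy of $P_5$, the level-$0$–level-$1$ cross edges, and a single turn-around at level $2$ for the one otherwise-hard pair; crucially none of these touch $w$, so they remain valid for every $m\ge 3$.

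The delicate part is the four paths from $w$. Each leaves its terminal, descends to a distinct level-$2$ vertex in one of the outer columns $v_1,v_2,v_4,v_5$, and must then climb to level $m-1$ to meet $w$, with all four climbs mutually edge-disjoint and disjoint from the bottom gadget. I would route each climb as a zigzag confined to a pair of adjacent columns, pairing $\{v_1,v_2\}$ for two of the paths and $\{v_4,v_5\}$ for the other two; the point is that the two paths sharing a pair start at level $2$ in opposite columns, so at every level gap they use the two oppositely-oriented cross edges and never collide, while the two pairs are disjoint (and avoid column $v_3$ entirely). Verifying that these upward zigzags never overload a shared level-$2$ vertex and that parity makes the four paths arrive at level $m-1$ in four distinct columns — hence reach $w$ through four distinct top edges — is the one genuinely fiddly check, and I expect it to be the crux. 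Once it is settled, combining with the degree upper bound gives ${\rm im}(\mu_m(P_5))=5$ for $m\ge 3$.
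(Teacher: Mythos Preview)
Your overall strategy --- degree counts for the upper bounds, explicit constructions for the lower bounds --- matches the paper exactly, and your plan for $m\ge 3$ does work once the details are filled in. The one substantive difference is the choice of terminals for the $K_5$-immersion: the paper takes the symmetric set $\{(v_2,0),(v_3,0),(v_4,0),(v_2,1),(v_4,1)\}$, using $w$ only as a \emph{peg} on two long paths that each zigzag up one side (columns $\{v_1,v_2\}$), pass through $w$, and descend the other side (columns $\{v_4,v_5\}$), whereas you take $w$ itself as a terminal and route four shorter one-way ladders up to it. Your modular split --- a fixed bottom gadget on levels $0,1,2$ plus four column-paired zigzags above it --- makes the uniform-in-$m$ verification clean; the paper's symmetric choice keeps the picture more elegant but requires the two up-and-over paths to be written out. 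Either route buys the same result with comparable effort.

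Two small remarks. First, your degree claim for $m=1$ is off: level $0$ is the top level there, so $(v_j,0)$ has degree $\deg_{P_5}(v_j)+1$, and in fact only $w$ has degree $\ge 4$ --- but this only strengthens the upper bound you need. Second, the paper does not spell out the $K_4$-immersions for $m\in\{1,2\}$ either, saying only that they are ``easily found''; your sketch there is already at least as explicit.
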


\begin{proof}
Label consecutive vertices in $P_5$ as $1, 2, \ldots, 5$ so that vertex $1$ has degree $1$ and consider $\mu_m(P_5)$.
When $m=1$ or $2$ there are not enough vertices of degree $4$ for $\mu_m(P_5)$ to have a $K_5$-immersion, therefore ${\rm im}(\mu_m(P_5))\leq 4$. Immersions of $K_4$ can easily be found in these cases.

When $m\geq 3$, there are $2m$ vertices of degree $2$, $m$ vertices of degree $3$, $3(m-1)$ vertices of degree $4$, and one vertex of degree $5$, therefore ${\rm im}(\mu_m(P_5))\leq 5$. We now provide a construction for a $K_5$-immersion.

As terminals of our $K_5$-immersion we use vertices $(2,0), (3,0), (4,0), (2,1),$ and $(4,1)$. Notice that $(3,0)$ is adjacent to all of the other terminals. To connect the remaining pairs of terminals we use the paths:
$$(2,0)-(3,1)-(4,0)\hspace{.2in} (2, 0)-(1,0)-(2,1)$$ 
$$(2, 1)-(3,2)-(4,1) \hspace{.2in} (4, 0)-(5,0)-(4,1)$$
$$(2,0)-(1,1)-(2,2)-(1,3)-\ldots - w - \ldots -(5,4)-(4,3)-(5,2)-(4,1)$$
$$(4,0)-(5,1)-(4,2)-(5,3)- \ldots - w- \ldots - (1,4)-(2,3)-(1,2)-(2,1)$$
This completes a $K_5$-immersion in $\mu_m(P_5)$.
$\Box$
\end{proof}

This result can be generalized as follows.

\begin{corollary}\label{GenMycP_n}
If $m\geq 3$ and $n\geq5$, then ${\rm im}(\mu_m(P_n))=5$.
\end{corollary}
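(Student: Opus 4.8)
The plan is to establish matching upper and lower bounds, $5 \le {\rm im}(\mu_m(P_n)) \le 5$. For the upper bound I would argue, via the degree count recalled just before Proposition~\ref{MycK_t}, that $\mu_m(P_n)$ admits no $K_6$-immersion: such an immersion would require at least six terminals, each of degree at least $5$. In $\mu_m(P_n)$ every vertex $u$ of $P_n$ has degree at most $2$, so a vertex $(u,0)$ or $(u,i)$ with $1 \le i \le m-2$ has degree at most $4$, a vertex $(u,m-1)$ has degree at most $3$, and the only remaining vertex is $w$, of degree $n$. Thus $w$ is the unique vertex of degree at least $5$, six such vertices cannot be found, and so ${\rm im}(\mu_m(P_n)) \le 5$.

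For the lower bound, the key observation is that the operation $\mu_m$ respects the subgraph relation. Selecting any five consecutive vertices of $P_n$ exhibits $P_5$ as a subgraph of $P_n$, and I would verify directly from Definition~\ref{GenMy} that this induces an embedding of $\mu_m(P_5)$ into $\mu_m(P_n)$: every edge of $\mu_m(P_5)$, whether of the form $(u,0)-(v,0)$, $(u,i)-(v,i+1)$, or $(u,m-1)-w$, comes from an edge $uv$ of $P_5 \subseteq P_n$ or a vertex of $P_5 \subseteq P_n$, and hence is also an edge of $\mu_m(P_n)$ once the two apex vertices are identified. Therefore $\mu_m(P_5)$ is a subgraph of $\mu_m(P_n)$.

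Because immersions are monotone under passing to a supergraph---any family of pairwise edge-disjoint terminal-joining paths witnessing an immersion in $\mu_m(P_5)$ lies entirely inside $\mu_m(P_n)$---the $K_5$-immersion guaranteed by Proposition~\ref{MyP_5} for $m \ge 3$ transfers immediately to $\mu_m(P_n)$. This gives ${\rm im}(\mu_m(P_n)) \ge 5$, and combining with the upper bound yields ${\rm im}(\mu_m(P_n)) = 5$.

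I expect the only delicate point to be the subgraph embedding $\mu_m(P_5) \subseteq \mu_m(P_n)$, and in particular checking that the single shared apex $w$ causes no difficulty: the edges incident to $w$ used by the $\mu_m(P_5)$-construction form a subset of the $w$-edges already present in $\mu_m(P_n)$, so nothing is lost in the transfer. Everything else reduces to the routine degree computation above together with the monotonicity of immersion, so no ideas beyond Proposition~\ref{MyP_5} should be needed.
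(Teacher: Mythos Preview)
Your proposal is correct and follows essentially the same approach as the paper: an upper bound via the degree count (only $w$ has degree at least $5$, so no $K_6$-immersion), and a lower bound by observing that $\mu_m(P_5)$ is a subgraph of $\mu_m(P_n)$ and invoking Proposition~\ref{MyP_5}. The paper's version is terser, but your extra care in verifying the subgraph embedding and the monotonicity of immersion is not misplaced.
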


\begin{proof}
Let $m\geq 3$ and $n\geq 5$. In $\mu_m(P_n)$ there are $2m$ vertices of degree $2$, $n-2$ vertices of degree $3$, $(m-1)(n-2)$ vertices of degree $4$, and one vertex of degree $n$. Therefore  ${\rm im}(\mu_m(P_n))\leq5$. We see that $\mu_m(P_5)$ is a subgraph of $\mu_m(P_n)$, thus by Proposition~\ref{MyP_5}  ${\rm im}(\mu_m(P_n))\geq5$. We have shown  ${\rm im}(\mu_m(P_n))=5$.
$\Box$
\end{proof}

As another example where the immersion number exceeds the bound of Theorem~\ref{Myc} we consider the cycle on $n$ vertices, $C_n$. Note that ${\rm im}(C_n)=3$.

\begin{proposition} If $n\geq 5$, then 
$ {\rm im}(\mu_m(C_n))=
  \begin{cases}
    4 & \text{for } m = 1\\
    5 & \text{for } m \geq 2 
  \end{cases}$
\end{proposition}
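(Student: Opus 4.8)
The plan is to establish the upper and lower bounds separately, following the template already set by the earlier examples. For the upper bound, I would perform a degree count in $\mu_m(C_n)$. Since $C_n$ is $2$-regular, each vertex $v$ of $C_n$ gives rise to vertices $(v,i)$ whose degrees depend on the layer $i$: the layer-$0$ vertices have degree $4$ (two horizontal neighbors within layer $0$, plus two neighbors in layer $1$), the intermediate layers give degree $4$, and the apex $w$ has degree $n$. Vertices in the top layer $m-1$ pick up an extra edge to $w$. The key observation is that the number of vertices of degree at least $5$ is very small --- essentially only $w$ --- so there cannot be a $K_6$-immersion, giving ${\rm im}(\mu_m(C_n))\le 5$; and when $m=1$ a finer count (only $w$ has high degree, all $C_n$ vertices have degree $3$) forces ${\rm im}(\mu_1(C_n))\le 4$.

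For the lower bounds I would invoke the two tools already available. When $m=1$, note that $\mu_1(C_n)$ is $C_n$ with a universal apex $w$; since ${\rm im}(C_n)=3$, Theorem~\ref{Myc} immediately yields ${\rm im}(\mu_1(C_n))\ge 3+1=4$, matching the upper bound. When $m\ge 2$, Theorem~\ref{Myc} only gives ${\rm im}(\mu_m(C_n))\ge 4$, which is not enough, so I would instead exhibit an explicit $K_5$-immersion by hand, exactly as in the proof of Proposition~\ref{MyP_5}. The natural choice of terminals is five vertices drawn from the bottom two layers near a fixed arc of the cycle --- say $(v_2,0),(v_3,0),(v_4,0)$ together with $(v_2,1)$ and $(v_4,1)$, using $(v_3,0)$ as a vertex adjacent to all the others --- and then to route the remaining ten paths using short detours through adjacent layers and, for the longest connections, the long alternating ``zig-zag'' paths that climb up to $w$ and back down, precisely the device used for $P_5$.

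The main obstacle I expect is verifying edge-disjointness of the routing paths in the $m\ge 2$ case, and in particular handling the boundary case $m=2$, where there are no intermediate layers and the alternating climb-to-$w$ paths degenerate to very short paths. Because $C_n$ contains $P_5$ as a subgraph (for $n\ge 5$), one clean way to dispatch $m\ge 3$ is to observe that $\mu_m(P_5)$ embeds as a subgraph of $\mu_m(C_n)$ and quote Proposition~\ref{MyP_5} for the $K_5$-immersion, mirroring Corollary~\ref{GenMycP_n}; this reduces the genuinely new work to the single case $m=2$, where I would construct the $K_5$-immersion directly and check by inspection that the (short) paths pairwise share no edge.
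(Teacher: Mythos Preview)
Your approach matches the paper's almost exactly: degree counts for the upper bounds, the $\mu_m(P_5)$ subgraph for the lower bound when $m\ge 3$, and an explicit $K_5$-immersion for $m=2$. (For the $m=1$ lower bound you invoke Theorem~\ref{Myc} directly, while the paper instead quotes Proposition~\ref{MyP_5} via the subgraph $\mu_1(P_5)\subseteq\mu_1(C_n)$; your route is arguably cleaner.)

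There is, however, one concrete problem with the $m=2$ construction you sketch. In $\mu_2(C_n)$ layer~$1$ is the \emph{top} layer: each $(v_j,1)$ is adjacent only to $(v_{j-1},0)$, $(v_{j+1},0)$, and $w$, so it has degree~$3$. Hence your proposed terminals $(v_2,1)$ and $(v_4,1)$ cannot serve as terminals of a $K_5$-immersion, and the $P_5$-style choice (which needs $m\ge 3$ so that layer-$1$ vertices have degree~$4$) breaks down precisely at $m=2$. The paper resolves this by taking all five terminals from layer~$0$, namely $(v_1,0),\ldots,(v_5,0)$, each of degree~$4$; the routing then requires a short case split according to whether $n=5$, $n=6$, or $n\ge 7$, since the wrap-around paths along the bottom cycle and through $w$ behave differently for small $n$. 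Once you switch to layer-$0$ terminals the construction goes through, but you should anticipate this case analysis rather than a single uniform picture.
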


\begin{proof}
Let $n\geq 5$. 

In $\mu_1(C_n)$ there are $n$ vertices of degree $3$ and one vertex of degree $n$, therefore  ${\rm im}(\mu_1(C_n))\leq4$. Notice that $\mu_1(P_5)$ is a subgraph of $\mu_1(C_n)$ thus, by Proposition~\ref{MyP_5}, ${\rm im}(\mu_1(C_n))\geq4$ and we get ${\rm im}(\mu_1(C_n))=4$.

When $m\geq 2$, $\mu_m(C_n)$ has $n$ vertices of degree $3$, $(m-1)n\geq 5$ vertices of degree $4$, and one vertex of degree $n$, therefore ${\rm im}(\mu_m(C_n))\leq5$.  

When $m\geq 3$, we use the fact that $\mu_m(P_n)$ is a subgraph of $\mu_m(C_n)$ therefore, by Proposition~\ref{MyP_5} and Corollary~\ref{GenMycP_n}, ${\rm im}(\mu_m(C_n))\geq5$. Thus for $n\geq 5$ and $m\geq 3$ ${\rm im}(\mu_m(C_n))=5$.

For the case when $m=2$ we will construct a $K_5$-immersion. Label consecutive vertices in $C_n$ $v_1, v_2, \ldots, v_n$. As the terminals of our immersion we use $(v_1, 0), (v_2, 0), (v_3, 0), (v_4, 0)$ and $(v_5, 0)$. No matter the value of $n$ we use edges $(v_1, 0)-(v_2, 0), (v_2, 0)-(v_3, 0), (v_3, 0)-(v_4, 0),$ and $(v_4, 0)-(v_5, 0)$ and the paths $(v_1, 0)-(v_2,1)-(v_3,0)$,  $(v_2, 0)-(v_3,1)-(v_4,0)$, and $(v_3, 0)-(v_4,1)-(v_5,0)$ in our $K_5$-immersion. The remaining paths depend on the value of $n$.

\noindent Case 1: When $n=5$ the remaining paths are
$$(v_1,0)-(v_5,1)-(v_4,0),$$ $$(v_1,0)-(v_5,0),$$ $$(v_2,0)-(v_1,1)-(v_5,0)$$
Case 2: When $n=6$ the remaining paths are
$$(v_1,0)-(v_6,0)-(v_5,1)-(v_4,0),$$ $$(v_1,0)-(v_6,1)-(v_5,0),$$ $$(v_2,0)-(v_1,1)-(v_6,0)-(v_5,0)$$
Case 3: When $n\geq7$ the remaining paths are
$$(v_1,0)-(v_n,1)-w-(v_5,1)-(v_4,0),$$ $$(v_1,0)-(v_n,0)-(v_{n-1},0) -\dots-(v_6,0)-(v_5,0),$$ $$(v_2,0)-(v_1,1)-w-(v_6,1)-(v_5,0)$$
This completes the $K_5$-immersion in $\mu_2(C_n)$. $\Box$
\end{proof}

Some preliminary work leads us to believe that ${\rm im}(\mu_m(K_t))> t+1$ for a large enough $m$. For example, we can show ${\rm im}(\mu_3(K_4))=7$ and ${\rm im}(\mu_4(K_5))=9$ (see Appendix \ref{appendix} for explicit immersions). In $\mu_m(K_t)$ there are $t+1$ vertices of degree $t$ and $(m-1)t$ vertices of degree $2t-2$. When $m>2$ there are enough vertices to potentially have a $K_{2t-1}$-immersion in ${\rm im}(\mu_m(K_t))$ and for $t>2$ we know $t+1< 2t-1$. This means $t+1\leq{\rm im}(\mu_m(K_t))\leq 2t-1$ for $m>2$ and $t>2$. This leads us to make the following conjecture.

\begin{conj} If $m\geq 3$, then ${\rm im}(\mu_m(K_{m+1}))=2m+1$.
\end{conj}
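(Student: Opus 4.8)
The plan is to establish matching bounds, with the upper bound immediate and essentially all of the work in the lower bound. For the upper bound, recall from the degree computation that in $\mu_m(K_{m+1})$ every vertex $(u,i)$ with $0\le i\le m-2$ has degree $2(m+1)-2=2m$, while the $m+1$ vertices of layer $m-1$ and the apex $w$ have degree $m+1$; thus $\Delta(\mu_m(K_{m+1}))=2m$. Since a terminal of a $K_s$-immersion needs degree at least $s-1$, every graph satisfies ${\rm im}(G)\le\Delta(G)+1$, giving ${\rm im}(\mu_m(K_{m+1}))\le 2m+1$ at once. Note that Theorem~\ref{Myc} alone yields only the far weaker ${\rm im}(\mu_m(K_{m+1}))\ge (m+1)+1$, so a direct construction is needed.

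For the lower bound I would exhibit a $K_{2m+1}$-immersion whose terminals are all $m+1$ vertices of layer $0$ together with the $m$ layer-$1$ vertices $(1,1),\dots,(m,1)$, leaving $(m+1,1)$ as the unique non-terminal of layer $1$. A degree count then pins down almost the entire structure: the $\binom{2m+1}{2}$ paths contribute $2\binom{2m+1}{2}=(2m+1)(2m)$ endpoints at terminals, which already equals the sum of the terminal degrees. Hence in any such immersion every edge at a terminal is used exactly once, as the first edge of a path, and no path passes through a terminal internally. This forces each edge joining two terminals to be a single-edge path realizing its own pair: the $\binom{m+1}{2}$ layer-$0$ pairs are handled by the layer-$0$ copy of $K_{m+1}$, and the $m^2$ cross pairs $(j,0)-(k,1)$ with $j\neq k$ by the single edges $(j,0)-(k,1)$. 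The only nontrivial connections left are the $m$ ``diagonal'' cross pairs $(a,0)-(a,1)$ and the $\binom{m}{2}$ layer-$1$ pairs $(a,1)-(b,1)$; since $(m+1,1)$ is the unique non-terminal neighbor of each $(a,0)$, the former are forced onto paths $(a,0)-(m+1,1)-(c,2)-(a,1)$, and the latter onto paths $(a,1)-(c,2)-(b,1)$. One checks these four families sum to exactly $\binom{2m+1}{2}$ connections and together must consume every edge between layers $1$ and $2$ exactly once.

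The crux is then to choose the midpoints $(c,2)$ so that all of these paths are edge-disjoint. As $(c,2)$ is adjacent to $(j,1)$ precisely when $j\neq c$, this is equivalent to a proper edge-coloring of the complete graph $K_{m+1}$ on the layer-$1$ indices $\{1,\dots,m+1\}$ in which color $c$ avoids vertex $c$ (that is, edge $\{a,b\}$ receives neither color $a$ nor color $b$) --- the same flavor of edge-coloring argument already exploited in the proof of Proposition~\ref{dnp}. When $m$ is even, $m+1$ is odd, and the classical $\mathbb{Z}_{m+1}$ coloring given by $c(a,b)\equiv 2^{-1}(a+b)\pmod{m+1}$ (with $2^{-1}$ the inverse of $2$, which exists as $m+1$ is odd) has exactly this property: each color class is a near-perfect matching missing its own index. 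In this case the whole immersion lives in layers $0,1,2$ and never reaches $w$.

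The main obstacle is the case $m$ odd. Then $m+1$ is even, and no proper edge-coloring of $K_{m+1}$ can make color $c$ avoid vertex $c$ for every $c$, since its color classes are perfect matchings covering all vertices; equivalently, each $(c,2)$ now has an odd number $m$ of downward edges, which cannot be paired off entirely within layers $1$ and $2$. The parity defect must be absorbed by rerouting about $(m+1)/2$ of the layer-$1$ pairs upward, replacing a path $(a,1)-(c,2)-(b,1)$ by a zig-zag $(a,1)-(c,2)-\cdots-w-\cdots-(c',2)-(b,1)$ that climbs through layers $3,\dots,m-1$ to the apex and descends again, in the spirit of the paths built in Theorem~\ref{Myc}. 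Such reroutes spend one upward edge at each layer-$2$ vertex (restoring parity) and exactly saturate the degree-$(m+1)$ apex with $(m+1)/2$ pass-through paths; for $m=3$ this is literally $(a,1)-(c,2)-w-(c',2)-(b,1)$, recovering ${\rm im}(\mu_3(K_4))=7$. Specifying these zig-zags explicitly for all odd $m$ and verifying that they, the forced short paths, and the coloring of the edges between layers $1$ and $2$ are simultaneously edge-disjoint --- with $w$ as the sole genuine bottleneck in the upper layers --- is the delicate heart of the argument.
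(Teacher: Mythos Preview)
The statement you are attempting is labeled a \emph{Conjecture} in the paper, and the paper offers no proof of it --- only the two explicit immersions in the Appendix showing ${\rm im}(\mu_3(K_4))=7$ and ${\rm im}(\mu_4(K_5))=9$, together with the degree-sequence observation that ${\rm im}(\mu_m(K_t))\le 2t-1$. So there is no ``paper's own proof'' to compare your proposal against.

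That said, your proposal goes well beyond what the paper establishes. Your upper bound is correct and immediate. For the lower bound, your framework --- take all of layer~$0$ and all but one vertex of layer~$1$ as terminals, then use the tight degree count to force the shape of every path --- is sound, and your reduction of the remaining $\binom{m+1}{2}$ nontrivial connections to a proper edge-coloring of $K_{m+1}$ in which color $c$ avoids vertex $c$ is correct. For even $m\ge 4$ this genuinely proves the conjecture: the coloring $c(a,b)\equiv 2^{-1}(a+b)\pmod{m+1}$ has exactly the required property, and the resulting $K_{2m+1}$-immersion sits entirely inside layers $0,1,2$. (The paper's $m=4$ example in the Appendix uses a different routing that detours through layer~$3$; yours is cleaner and just as valid.)

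For odd $m$, however, you have a plan rather than a proof. You correctly identify the parity obstruction --- each layer-$2$ vertex has an odd number $m$ of edges down to layer~$1$, so no such coloring can exist, and some paths must be rerouted upward --- and you observe that $(m+1)/2$ reroutes through $w$ would exactly saturate the apex. But your closing sentence, that specifying these zig-zags and verifying edge-disjointness ``is the delicate heart of the argument,'' is an admission that this part is not done. Until those reroutes are written down explicitly and checked for all odd $m\ge 3$, the odd case, and hence the conjecture, remains open --- which is exactly the status the paper assigns it.
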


\appendix 

\section{Appendix}\label{appendix}

Here we provide explicit immersions in $\mu_3(K_4)$ and $\mu_4(K_5)$.

\begin{example}
Label the vertices of a $K_4$ $v_1, v_2, \ldots, v_4$. As the terminals of our  $K_7$-immersion in $\mu_3(K_4)$ we use vertices $(v_i, 0)$ for $i=1, 2, 3, 4$ and $(v_j, 1)$ for $j=1, 2, 3$. The terminals $(v_i, 0)$ form a $K_4$ and $(v_i, 0)$ is adjacent to $(v_j, 1)$ when $i\neq j$. We complete the $K_7$-immersion using the following paths.
\begin{align*}
\label{}
    &(v_1,0)-(v_4, 1)-(v_2,2)-(v_1,1) &\hspace{.2in} (v_1,1)-(v_4, 2)-(v_3,1)   \\
    &(v_1,1)-(v_3, 2)-w-(v_1,2)-(v_2,1) &\hspace{.2in}   (v_2,0)-(v_4, 1)-(v_3,2)-(v_2,1)  \\
    &(v_2,1)-(v_4, 2)-w-(v_2,2)-(v_3,1)  & \hspace{.2in}  (v_3, 0)-(v_4, 1)-(v_1,2)-(v_3,1)
\end{align*}
\end{example}

\begin{example}
Label the vertices of a $K_5$ $v_1, v_2, \ldots, v_5$. As the terminals of our  $K_9$-immersion in $\mu_4(K_5)$ we use vertices $(v_i, 0)$ for $i=1, 2, 3, 4, 5$ and $(v_j, 1)$ for $j=1, 2, 3, 4$. The terminals $(v_i, 0)$ form a $K_5$ and $(v_i, 0)$ is adjacent to $(v_j, 1)$ when $i\neq j$. We complete the $K_9$-immersion using the following paths.
\begin{align*}
\label{}
    &(v_1,0)-(v_5, 1)-(v_4,2)-(v_1,1) &\hspace{.2in} (v_2,0)-(v_5,1)-(v_3,2)-(v_2,1)  \\
    &(v_3,0)-(v_5, 1)-(v_2,2)-(v_3,1) &\hspace{.2in} (v_4,0)-(v_5, 1)-(v_1,2)-(v_4,1) \\
    &(v_1,1)-(v_3, 2)-(v_2, 3)-(v_4,2)-(v_2,1) &\hspace{.2in} (v_1,1)-(v_5,2)-(v_3,1)  \\
    & (v_1,1)-(v_2, 2)-(v_4,1)  &\hspace{.2in}  (v_2,1)-(v_1, 2)-(v_3,1)\\
      & (v_3,1)-(v_4, 2)-(v_5,3)-(v_3, 2)-(v_4, 1) & \hspace{.2in} (v_2, 1)-(v_5, 2)-(v_4,1)  
\end{align*}
\end{example}

\end{document}